\documentclass[oneside]{amsart}

\usepackage[a4paper]{geometry}

\usepackage{amssymb,amsfonts,amsmath,stmaryrd}
\usepackage{comment}
\usepackage[all,arc]{xy}
\usepackage{enumerate}
\usepackage{mathrsfs}

\usepackage{tikz}
\usepackage{tikz-cd}
\usetikzlibrary{trees}
\usetikzlibrary[shapes]
\usetikzlibrary[arrows]
\usetikzlibrary{patterns}
\usetikzlibrary{fadings}
\usetikzlibrary{backgrounds}
\usetikzlibrary{decorations.pathreplacing}
\usetikzlibrary{decorations.pathmorphing}
\usetikzlibrary{positioning}

\usepackage{xcolor} 
\usepackage{graphicx}
\usepackage[pagebackref]{hyperref}

\definecolor{dark-red}{rgb}{0.5,0.15,0.15}
\definecolor{dark-blue}{rgb}{0.15,0.15,0.6}
\definecolor{dark-green}{rgb}{0.15,0.6,0.15}
\hypersetup{
    colorlinks, linkcolor=dark-red,
    citecolor=dark-blue, urlcolor=dark-green
}

\renewcommand*{\backref}[1]{}
\renewcommand*{\backrefalt}[4]{%
  \ifcase #1 %
No citations.
  \or
(cit. on p. #2).%
  \else
(cit on pp. #2).%
  \fi%
}

\usepackage[nameinlink,capitalise,noabbrev]{cleveref}
\newtheorem{thm}{Theorem}[section]
\newtheorem{cor}[thm]{Corollary}
\newtheorem{prop}[thm]{Proposition}

\theoremstyle{definition}
\newtheorem{defn}[thm]{Definition}

\newtheorem{ex}[thm]{Example}

\theoremstyle{remark}
\newtheorem{rem}[thm]{Remark}
\bibliographystyle{plain}

\theoremstyle{theorem}
\newtheorem*{thm*}{Theorem}
\newtheorem*{thma}{Theorem A}
\newtheorem*{thmb}{Theorem B}
\newtheorem*{freydgh}{Freyd's generating hypothesis}
\newtheorem*{gh}{The generating hypothesis}
\newtheorem*{kngh}{The $K(n)$-local generating hypothesis}
\newtheorem*{pickngh}{The $\Pic$-graded $K(n)$-local generating hypothesis}
\theoremstyle{remark}
\newtheorem*{rem*}{Remark}

\makeatletter
\let\c@equation\c@thm
\makeatother
\numberwithin{equation}{section}

\DeclareMathOperator{\Sp}{Sp}
\DeclareMathOperator{\Hom}{Hom}
\DeclareMathOperator{\End}{End}

\DeclareMathOperator{\cC}{\mathcal{C}}
\DeclareMathOperator{\cD}{\mathcal{D}}

\DeclareMathOperator{\Mod}{Mod}

\DeclareMathOperator{\Thick}{Thick}

\DeclareMathOperator{\op}{op}
\DeclareMathOperator{\dual}{dual}

\DeclareMathOperator{\Pic}{Pic}
\DeclareMathOperator{\Perf}{Perf}
\DeclareMathOperator{\Coh}{Coh}
\DeclareMathOperator{\StMod}{StMod}


\newcommand{\Z}{\mathbb{Z}}
\newcommand{\Q}{\mathbb{Q}}

\Crefname{figure}{Figure}{Figures}
\Crefname{assu}{Assumption}{Assumptions}

\newcommand{\cF}{\mathcal{F}}
\newcommand{\cG}{\mathcal{G}}
\newcommand{\cO}{\mathcal{O}}

\title[The $K(n)$-local generating hypothesis]{Auslander--Reiten sequences, Brown--Comenetz duality, and the $K(n)$-local generating hypothesis}

\author{Tobias Barthel}
\address{Max Planck Institute for Mathematics, Bonn, Germany}
\email{tbarthel@mpim-bonn.mpg.de}

\date{\today}

\begin{document}
\maketitle

\begin{abstract}
In this paper, we construct a version of Auslander--Reiten sequences for the $K(n)$-local stable homotopy category. In particular, the role of the Auslander--Reiten translation is played by the local Brown--Comenetz duality functor. As an application, we produce counterexamples to the $K(n)$-local generating hypothesis for all heights $n>0$ and all primes. Furthermore, our methods apply to other triangulated categories, as for example the derived category of quasi-coherent sheaves on a smooth projective scheme. 
\end{abstract}


\section{Introduction}

In his paper \cite{freydgh}, Freyd proposed the following generating hypothesis for the homotopy category of finite spectra $\Sp^{\omega}$, arguably one of the most important open problems in stable homotopy theory. 

\begin{freydgh}
A map $f\colon X \to Y$ between finite spectra $X$ and $Y$ is null-homotopic if and only if $\pi_*f = 0$.
\end{freydgh}

Note that the finiteness assumption is crucial, as the generating hypothesis fails in the whole stable homotopy category: as a counterexample, one may take any positive degree Steenrod operation. Surprisingly, Freyd showed that faithfulness of $\pi_*$ would imply fullness, thereby producing an embedding
\[
\xymatrix{\pi_*\colon \Sp^{\omega} \ar[r] & \Mod_{\pi_*S^0}.}
\]
However, the generating hypothesis for $\Sp^{\omega}$ remains an open problem, and besides a number of interesting consequences that could be deduced~\cite{freydgh, hoveygh}, essentially nothing is known about it. 

To gain some intuition about Freyd's original statement, it is natural to seek to understand versions of the generating hypothesis in structurally similar categories. Examples include the derived category of a ring \cite{hlpgenhypdermod} and the stable module category appearing in modular representation theory \cite{ccmgenhypstmod}; a more abstract approach is taken in \cite{bohmannmaygh}.

In a different direction, Devinatz and Hopkins \cite{devinatzgh1,devinatzgh2} describe a programme to prove the generating hypothesis with target $S^0$ by reducing it to a family of conjectures in chromatic stable homotopy theory. Motivated by their approach, we formulate and study a $K(n)$-local version of the generating hypothesis, where $K(n)$ denotes Morava $K$-theory at a given height $n$ and prime $p$. Roughly speaking, the $K(n)$-local categories stratify the stable homotopy category and are thus the fundamental building blocks of the stable homotopy category. In analogy with Freyd's original statement, the $K(n)$-local generating hypothesis asserts that $\pi_*$ defines a faithful functor on the category of compact $K(n)$-local spectra. 

\begin{kngh}
If $f\colon X \to Y$ is a morphism in $\Sp_{K(n)}^{\omega}$ with $\pi_*f = 0$, then $f=0$. 
\end{kngh}

This statement is related to conjectures formulated by Devinatz in \cite{devinatzgh2}, although they appear to be logically independent from the version of the $K(n)$-local generating hypothesis considered here. Therefore, the results of this paper do not have any direct consequences for the Devinatz--Hopkins approach to Freyd's generating hypothesis with target $S^0$. 

Our approach to the $K(n)$-local generating hypothesis is inspired by the solution \cite{ccmgenhypstmod} of the generating hypothesis for the stable module category of a finite group $G$ over a field $k$ of characteristic $p$ dividing the order of $G$. In this case, the existence of Auslander--Reiten sequences (or almost split sequences) allows to construct counterexamples to the generating hypothesis whenever the thick subcategory generated by $k$ is not semi-simple. Observing that Auslander--Reiten sequences have been constructed in very general settings \cite{krauseartbrown,beligiannisart}, this offers an approach to the generating hypothesis for general triangulated categories. 

Suppose $\cC$ is a suitable triangulated category with a distinguished object $S$, and define the homotopy groups of any $X \in \cC$ as $\pi_*X = \Hom_{\cC}^*(S,X)$. We say that the generating hypothesis holds for a full subcategory $\cD \subseteq \cC$ if the restriction of $\pi_*$ to $\cD$ is faithful. Our first theorem gives sufficient conditions for the existence of counterexamples to the generating hypothesis in $\cD$. 

\begin{thma}
Suppose $\cC$ is a compactly generated stable homotopy category, and let $\cD \subseteq \cC$ be a thick subcategory that admits Auslander--Reiten sequences. If $\cD$ contains indecomposable objects which are not equivalent to a retract of a direct sum of shifted copies of $S$, then the generating hypothesis does not hold in $\cD$.
\end{thma}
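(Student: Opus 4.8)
The plan is to exhibit the connecting morphism of an Auslander--Reiten triangle as an explicit counterexample. By hypothesis $\cD$ contains an indecomposable object $Z$ that is not a retract of a direct sum of shifted copies of $S$, and since $\cD$ admits Auslander--Reiten sequences there is an Auslander--Reiten triangle ending at $Z$,
\[
X \xrightarrow{\,u\,} Y \xrightarrow{\,v\,} Z \xrightarrow{\,w\,} \Sigma X,
\]
with $X$ and $Z$ indecomposable and with nonzero connecting map $w$. All of $X, Y, Z$ lie in $\cD$, and as $\cD$ is thick the shift $\Sigma X$ does too, so $w$ is a genuine morphism of $\cD$. I claim that $w$ violates the generating hypothesis: it is nonzero, yet $\pi_* w = 0$.

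To verify $\pi_* w = 0$ I would show that $w \circ g = 0$ for every $k$ and every morphism $g\colon \Sigma^k S \to Z$. The crucial point is that no such $g$ is a split epimorphism: a section of $g$ would realise $Z$ as a retract of the single shifted copy $\Sigma^k S$, hence as a retract of a direct sum of shifted copies of $S$, contradicting the choice of $Z$. Consequently, the right almost split property of $v$ applies to $g$ and yields a factorisation $g = v \circ t$ for some $t\colon \Sigma^k S \to Y$. Since $w \circ v = 0$ as consecutive morphisms in a triangle, we obtain $w \circ g = w \circ v \circ t = 0$. As $g$ and $k$ were arbitrary, this gives $\pi_* w = 0$, whereas $w \neq 0$ by definition of an Auslander--Reiten triangle; hence $\pi_*$ is not faithful on $\cD$.

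The step I expect to require the most care is the application of the right almost split property to the maps $g\colon \Sigma^k S \to Z$, which presupposes that these are morphisms of the category $\cD$ carrying the Auslander--Reiten structure; this is exactly where one invokes that the shifted spheres $\Sigma^k S$ belong to $\cD$ (equivalently $S \in \cD$), as holds for the $K(n)$-local sphere in the intended application. The other ingredients are formal: the standing assumption that $\cC$ is a compactly generated stable homotopy category guarantees that idempotents split, so that retracts and indecomposability are well behaved and the Auslander--Reiten triangle at the indecomposable $Z$ is available. It is worth noting that although the hypothesis is phrased with arbitrary direct sums, the argument only uses the weaker consequence that $Z$ is not a retract of a \emph{single} shifted copy of $S$; once the equivalence between ``$Z$ is a retract of $\Sigma^k S$'' and ``$g\colon \Sigma^k S \to Z$ splits'' is in hand, the vanishing $\pi_* w = 0$ follows immediately from $w \circ v = 0$.
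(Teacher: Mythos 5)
Your core argument is the same as the paper's proof of \Cref{prop:argenhyp}: take the Auslander--Reiten triangle ending in a non-projective indecomposable, note that the connecting map is nonzero because the triangle does not split, and show its $\pi_*$ vanishes by using that no map $\Sigma^k S \to Z$ can be a split epimorphism, so every such map factors through the middle term by the right almost split property and dies against the connecting map. That part is correct.

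Your closing paragraph, however, contains an error that is worth fixing. You assert that applying the right almost split property to $g\colon \Sigma^k S \to Z$ requires $\Sigma^k S \in \cD$, ``as holds for the $K(n)$-local sphere in the intended application.'' It does \emph{not} hold there: for $n>0$ the unit $L_{K(n)}S^0$ is dualizable but not compact, so $S \notin \cD = \Sp_{K(n)}^{\omega}$---and this non-membership is precisely what makes every nonzero indecomposable compact object non-projective in the proof of \Cref{cor:localgenhyp}. Your proof survives anyway, but for a different reason: in \Cref{defn:arsequence}, conditions (2) and (3) quantify over arbitrary maps $g'\colon Y'\to Y$ with $Y'$ an arbitrary object of $\cC$; only the three terms of the triangle are required to lie in $\cD$. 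Hence the factorization of $g\colon \Sigma^k S \to Z$ through the middle term needs no hypothesis on where $\Sigma^k S$ lives. Had the almost split condition been tested only against objects of $\cD$, as you implicitly assume, the argument (yours and the paper's alike) would fail exactly in the motivating $K(n)$-local example, so getting this scoping right is not a pedantic point.
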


As an application, we show that the generating hypothesis fails for the derived category of quasi-coherent sheaves over a non-trivial smooth projective scheme over a field $k$.

The existence of Auslander--Reiten sequences is closely related to the existence of an Auslander--Reiten translation, which simultaneously generalizes the notion of Serre duality in algebraic geometry and Brown--Comenetz duality in stable homotopy theory. Using a local version of Brown--Comenetz duality, in \Cref{sec:kngh}, we verify that the $K(n)$-local stable homotopy category $\Sp_{K(n)}$ satisfies the conditions of Theorem A, thereby obtaining our second theorem.

\begin{thmb}
The category of compact $K(n)$-local spectra admits Auslander--Reiten sequences, with the Auslander--Reiten translation given by the dual of the local Brown--Comenetz duality functor. In particular, the $K(n)$-local generating hypothesis fails for any $n>0 $ and all primes $p$.
\end{thmb}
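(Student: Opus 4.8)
The plan is to deduce the statement from Theorem A in two movements: first, to exhibit a Serre functor on $\Sp_{K(n)}^{\omega}$ realised by local Brown--Comenetz duality, which by the existence criteria of Krause and Beligiannis \cite{krauseartbrown,beligiannisart} produces Auslander--Reiten sequences; and second, to verify the hypothesis of Theorem A by producing a suitable indecomposable compact object. Throughout, write $\mathbb{D}$ for Spanier--Whitehead duality on the compact (equivalently, dualisable) objects of $\Sp_{K(n)}$, write $F(-,-)$ for the $K(n)$-local function spectrum, and let $I_n$ denote the local Brown--Comenetz dualising spectrum constructed in \Cref{sec:kngh}, characterised on compact objects by the natural isomorphism $[\,X, I_n\,]_0 \cong (\pi_0 X)^{\vee}$, where $(-)^{\vee} = \Hom_{\Z}(-,\Q/\Z)$.

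First I would construct the Serre functor. For a compact object $C$ and arbitrary $X \in \Sp_{K(n)}$ the functor $\Hom(C,-)$ is homological and preserves coproducts, so its composite with the exact cogenerator $(-)^{\vee}$ sends coproducts to products and is representable by Brown representability. Concretely, using $F(C,X) \simeq \mathbb{D}C \wedge X$ together with the defining property of $I_n$, one obtains a chain of natural isomorphisms
\[
\Hom(C,X)^{\vee} \cong [\,\mathbb{D}C \wedge X,\, I_n\,]_0 \cong [\,X,\, F(\mathbb{D}C, I_n)\,]_0 \cong [\,X,\, C \wedge I_n\,]_0,
\]
so that the representing object is $C \wedge I_n$. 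Thus $\mathbb{S} := (-) \wedge I_n$ is a Serre functor, visibly the composite $\mathbb{D}_n \circ \mathbb{D}$ of Spanier--Whitehead duality with local Brown--Comenetz duality $\mathbb{D}_n = F(-, I_n)$. By Gross--Hopkins duality $I_n$ is invertible in $\Pic(\Sp_{K(n)})$, so $\mathbb{S}$ restricts to an autoequivalence of $\Sp_{K(n)}^{\omega}$; the existence theorems for Auslander--Reiten triangles \cite{krauseartbrown,beligiannisart} then furnish such triangles in $\Sp_{K(n)}^{\omega}$ with translation $\tau = \Sigma^{-1}\mathbb{S}$, the dual of the local Brown--Comenetz duality functor as claimed.

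It remains to apply Theorem A to $\cC = \Sp_{K(n)}$ with distinguished object $S = L_{K(n)}S^0$ and $\cD = \Sp_{K(n)}^{\omega} = \Thick(S)$. For $n>0$ I would take $X = L_{K(n)}(S^0/p)$; this is compact, and applying Morava $E$-theory $E = E_n$ to the cofibre sequence $S \xrightarrow{p} S \to X$ yields $E_* X \cong E_*/p$. Since $E_*$ is a graded regular local domain, its finitely generated projective modules are free and in particular torsion-free, whereas $E_*/p$ is a nonzero torsion module, hence not a direct summand of any free $E_*$-module. As $E$-homology is conservative on $\Sp_{K(n)}^{\omega}$ and $E_*/p$ is indecomposable over $E_*$, the object $X$ is indecomposable and is not a retract of any $\bigoplus_i \Sigma^{n_i} S$. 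Theorem A then produces a nonzero map in $\Sp_{K(n)}^{\omega}$ annihilated by $\pi_*$, so the $K(n)$-local generating hypothesis fails. (For $n=0$ the category is rationally semisimple and $L_{K(0)}(S^0/p) = 0$, consistent with the restriction $n>0$.)

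The main obstacle is the construction and verification of the Serre pairing in this non-Hom-finite setting: unlike the classical Reiten--Van den Bergh situation over a field, here the pairing takes values in $\Q/\Z$ and its perfectness is exactly the content of local Brown--Comenetz duality, which in turn rests on Gross--Hopkins duality and on the finiteness of the relevant $K(n)$-local homotopy groups. Establishing that $\mathbb{S}$ preserves compact objects -- equivalently, the invertibility of $I_n$ -- and that the abstractly representing object of $\Hom(C,-)^{\vee}$ coincides with the geometric functor $(-)\wedge I_n$ is where the real work lies; once this identification is in place, the remaining verifications and the appeal to Theorem A are routine.
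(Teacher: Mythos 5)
The first half of your proposal---representing $D\Hom(C,-)$ via local Brown--Comenetz duality, identifying the representing object as $\hat{I}(C^{\vee})\simeq C\otimes \hat{I}S^0$ using dualizability of compact objects, and then invoking Krause's existence theorem for Auslander--Reiten triangles---is essentially the paper's own argument for \Cref{thm:localart}. The paper checks $\Z_p$-linearity and finiteness of the $\Hom$-groups explicitly in order to apply \Cref{thm:art}, and it uses \Cref{prop:knbrowncomenetz} (the restriction of $\hat{I}$ to a duality on compacts) rather than Gross--Hopkins invertibility of $\hat{I}S^0$ to see that the translation preserves compacts; these are cosmetic differences, and your identification of ``where the real work lies'' is accurate.

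The second half, however, contains a genuine error. Your witness object $L_{K(n)}(S^0/p)$ is \emph{not} compact in $\Sp_{K(n)}$ once $n\ge 2$: in this category compactness is strictly stronger than dualizability, and your parenthetical ``compact (equivalently, dualisable)'' is exactly the false identification. The compact objects are $\Thick_{\Sp_{K(n)}}(L_{K(n)}F(n))$ for $F(n)$ finite of type $n$, and $X$ is compact if and only if $E_n^*X$ is degreewise finite. Since $S^0/p$ has type $1$, one has $E_n^*(L_{K(n)}(S^0/p))\cong E_n^*/p$, which contains $\F_{p^n}[[u_1,\dots,u_{n-1}]]$ in degree $0$ and is therefore infinite in each even degree when $n\ge 2$. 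So for $n\ge 2$ your object is dualizable (its $K(n)$-homology is finite) but lies outside $\cD=\Sp_{K(n)}^{\omega}$, which is where both your Auslander--Reiten sequences and the generating hypothesis live; your argument is valid only at $n=1$. The repair is simpler than your $E_*$-module computation and is the paper's \Cref{cor:localgenhyp}: take \emph{any} nonzero compact $Y$, without loss of generality indecomposable (Krull--Schmidt, \Cref{prop:knkrullschmidt}). If $Y$ were a retract of a direct sum of shifts of $S=L_{K(n)}S^0$, compactness would let the splitting factor through a finite subsum, and Krull--Schmidt together with indecomposability of $S$ (which follows from $K(n)_*$ being a graded field) would force $Y\simeq \Sigma^m S$---impossible, because $S$ is not compact for $n>0$. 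Thus every nonzero compact indecomposable is automatically ``non-projective'': ironically, the failure of ``compact $=$ dualizable'' that invalidates your choice of witness is precisely what makes the counterexample work.
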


Finally, we discuss a Picard group graded analogue of the $K(n)$-local generating hypothesis, with the same conclusion as for the $\Z$-graded one, see \Cref{cor:piclocalgenhyp}. We believe it would be interesting in its own right to study the Auslander--Reiten theory of $\Sp_{K(n)}$, e.g., to try to compute its Auslander--Reiten quiver, but we leave this task to future works. 

\subsection*{Conventions and terminology}

Throughout this paper, we will work in the context of axiomatic stable homotopy theory as developed in \cite{hps_axiomatic}. In particular, $\cC = (\cC,\otimes,S)$ will always denote a closed symmetric monoidal cocomplete triangulated category which is generated by a set of dualizable objects. Here, $S$ is the tensor unit of $\cC$. The suspension functor on $\cC$ is denoted $\Sigma$ and we usually refer to distinguished triangles as cofiber sequences. We write $\Hom_{\cC}(X,Y)$ for the abelian group of maps between two objects $X,Y \in \cC$ and $\Hom_{\cC}^*(X,Y) = \Hom_{\cC}(X,\Sigma^*Y)$ for the corresponding graded abelian group. If $F$ denotes the internal function object in $\cC$, then we define the Spanier--Whitehead dual of an object $X\in \cC$ as $X^{\vee}=F(X,S)$.

The full subcategory of compact objects of $\cC$ will be denoted by $\cC^{\omega}$. For any object $X \in \cC$, we write $\Thick_{\cC}(X)$ for the thick subcategory of $\cC$ generated by $X$, i.e., the smallest full subcategory of $\cC$ which contains $X$ and that is closed under finite colimits, suspensions, and retracts.

\subsection*{Acknowledgements}

I would like to thank Frank Gounelas, Drew Heard, Keir Lockridge, Nathaniel Stapleton, and Geordie Williamson for helpful converations, and the Max Planck Institute for Mathematics for its hospitality.

\section{Auslander--Reiten sequences in stable homotopy categories}\label{sec:abstract}

In this section, we develop an abstract framework in which one can study the generating hypothesis. We start by reviewing the tools from abstract Auslander--Reiten theory that we then use in \Cref{ssec:abstractgh} to prove our criterion for the existence of counterexamples to the generating hypothesis in a compactly generated stable homotopy category.

\subsection{Abstract Auslander--Reiten theory}\label{ssec:abstractartgh}

Auslander--Reiten theory was originally used as a device to analyse the structure of the category of modules over a finite dimensional algebra over a field $k$. The fundamental constructions, however, can be carried out in much greater generality, and we mostly follow \cite{krauseartbrown} and \cite{krauseartzimmermann} in our exposition. A more general and more comprehensive account is given in \cite{beligiannisart}.

\begin{defn}\label{defn:arsequence}
Suppose $\cC$ is a stable homotopy category. A cofiber sequence
\begin{equation}\label{eq:generalarseq}
\xymatrix{\Sigma^{-1}Z \ar[r]^-{f} & X \ar[r]^-{g} & Y}
\end{equation}
in $\cC$ is called an Auslander--Reiten sequence if it satisfies the following three conditions:
\begin{enumerate}
	\item The sequence \eqref{eq:generalarseq} does not split. 
	\item If $g'\colon Y' \to Y$ is not a split epimorphism, then there exists a factorization
	\[
	\xymatrix{& Y' \ar[d]^{g'} \ar@{-->}[dl] \\
	X \ar[r]_-g & Y.}
	\]
	\item Dually, every map $f'\colon \Sigma^{-1}Z \to Z'$ that is not a split monomorphism factors through $f$. 
\end{enumerate}
We say that a thick subcategory $\cD \subseteq \cC$ admits Auslander--Reiten sequences if any indecomposable object $Y \in \cD$ fits into an Auslander--Reiten sequence \eqref{eq:generalarseq} with $X,Z \in \cD$.
\end{defn}

It can be shown that the objects $Y$ and $Z$ appearing in an Auslander--Reiten sequence \eqref{eq:generalarseq} are indecomposable, and that the sequence is determined up to equivalence by either of them. For the details, we refer the interested reader to~\cite[Sec.~2]{krauseartbrown}, which also establishes several other characterizations of Auslander--Reiten sequences. This motivates to ask to what extent $Z$ depends functorially on $Y$. 

In order to answer this question, let us assume from now on that $\cC$ is $k$-linear for some complete local Noetherian commutative ring $k$ with maximal ideal $\mathfrak{m}$. In particular, this means that $\Hom_{\cC}(X,Y)$ is canonically a $k$-module for all objects $X,Y \in\cC$. We denote the injective hull of $k/\mathfrak{m}$ by $E=E(k)$ and consider the Matlis duality functor
\[
\xymatrix{D = \Hom_k(-,E)\colon \Mod_k \ar[r] & \Mod_k}
\]
on the category $\Mod_k$ of $k$-modules. By Brown representability for $\cC$ and using that $E$ is injective, the functor 
\[
\xymatrix{D\Hom_{\cC}(X,-)\colon \cC^{\op} \ar[r] & \Mod_k}
\]
is representable by an object $TX \in \cC$ for any $X \in \cC$. Under suitable conditions on $\cC$, $TX$ is equivalent to $Z$ in the Auslander--Reiten sequence \eqref{eq:generalarseq}, as we will see shortly. 

\begin{defn}\label{defn:artranslation}
An Auslander--Reiten translation for $\cC$ is a fully faithful functor $T\colon \cC^{\omega} \to \cC$ satisfying the Auslander--Reiten formula, i.e., such that there is a binatural equivalence
\begin{equation}\label{eq:arduality}
\xymatrix{D\Hom_{\cC}(X,Y) \cong \Hom_{\cC}(Y,TX)}
\end{equation}
of $k$-modules, for all $X \in \cC^{\omega}$ and all $Y \in \cC$.
\end{defn}

An abstract existence theorem for Auslander--Reiten sequences was proven by Krause in \cite[Prop.~4.2, Thm.~4.4]{krauseartzimmermann}; it is summarized in the following theorem.

\begin{thm}[Krause]\label{thm:art}
Suppose that $\cC$ is a $k$-linear compactly generated stable homotopy category such that $\Hom_{\cC}(X,Y) \in \Mod_k$ is finitely generated for all $X,Y \in \cC^{\omega}$, then $\cC$ admits an Auslander--Reiten translation $T$, and the following statements are equivalent:
\begin{enumerate}
	\item The functor $T\colon \cC^{\omega} \to \cC$ induces an equivalence
	\[
	\xymatrix{\cC^{\omega} \ar[r]^-{T} \ar@{-->}[d]_{\sim} & \cC \\
	\cC^{\omega}, \ar@{^{(}->}[ru]_{\iota}}
	\]
	where $\iota$ is the natural inclusion. 
	\item The subcategory $\cC^{\omega} \subseteq \cC$ admits Auslander--Reiten sequences.
\end{enumerate}
In this case, the Auslander--Reiten sequence for an indecomposable object $Y \in \cC^{\omega}$ has the form
\[
\xymatrix{\Sigma^{-1}TY \ar[r] & X \ar[r] & Y,}
\]
with $X,TY \in \cC^{\omega}$. 
\end{thm}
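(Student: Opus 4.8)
The plan is to prove the unconditional assertion first---that the finiteness hypothesis alone produces a fully faithful Auslander--Reiten translation $T$---and only afterwards to match the representing object $TY$ with the third term of an Auslander--Reiten sequence. To construct $T$, fix $X \in \cC^{\omega}$ and consider $D\Hom_{\cC}(X,-)\colon \cC \to \Mod_k$. Since $X$ is compact, $\Hom_{\cC}(X,-)$ sends coproducts to coproducts, and since $E$ is injective, $D$ is exact; hence $D\Hom_{\cC}(X,-)$ is a cohomological functor $\cC^{\op} \to \Mod_k$ carrying coproducts to products. Brown representability then yields $TX$ together with the binatural isomorphism \eqref{eq:arduality}, and Yoneda promotes $X \mapsto TX$ to a functor $T\colon \cC^{\omega} \to \cC$. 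Full faithfulness is where Matlis duality enters: for $X,X' \in \cC^{\omega}$ I would apply \eqref{eq:arduality} twice, $\Hom_{\cC}(TX',TX) \cong D\Hom_{\cC}(X,TX')$ and $\Hom_{\cC}(X,TX') \cong D\Hom_{\cC}(X',X)$, to get $\Hom_{\cC}(TX',TX) \cong DD\Hom_{\cC}(X',X)$. Because $k$ is complete local Noetherian and $\Hom_{\cC}(X',X)$ is finitely generated, the biduality map $\mathrm{id} \to DD$ is an isomorphism on it, and tracking the maps shows the resulting isomorphism $\Hom_{\cC}(X',X) \cong \Hom_{\cC}(TX',TX)$ is the one induced by $T$.

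For the implication $(1) \Rightarrow (2)$, given an indecomposable $Y \in \cC^{\omega}$ I would use that $\End_{\cC}(Y)$ is a finite algebra over the complete local ring $k$, hence semiperfect, so (as $Y$ is indecomposable) local with radical $J$ and division residue ring. Choosing a nonzero $k$-linear functional $\phi\colon \End_{\cC}(Y) \to E$ that vanishes on $J$ (equivalently, factors through the socle) produces, via \eqref{eq:arduality} with $X = Y$, a map $h\colon Y \to TY$, and I take the candidate to be the fiber sequence $\Sigma^{-1}TY \to X \to Y \xrightarrow{h} TY$, which lies in $\cC^{\omega}$ by hypothesis $(1)$. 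The three defining conditions then reduce to one computation: by naturality of \eqref{eq:arduality}, for $g'\colon Y' \to Y$ the composite $hg'$ corresponds to the functional $u \mapsto \phi(g'u)$ on $\Hom_{\cC}(Y,Y')$. If $g'$ is not a split epimorphism, then after decomposing $Y'$ into indecomposables each $g'u$ lands in $J$, so $\phi(g'u) = 0$ and hence $hg' = 0$, which is precisely the factorization of $g'$ through $g$ required in condition $(2)$. Non-splitting follows from $\phi \ne 0$, and condition $(3)$ is the formal dual.

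For $(2) \Rightarrow (1)$, I would start from the given Auslander--Reiten sequence $\Sigma^{-1}Z \to X \to Y$ with $X,Z \in \cC^{\omega}$ and identify $Z \cong TY$ by showing both carry the same universal property: the factorization characterization of the sequence matches the defining property of the functional $\phi$, pinning the connecting map $Y \to Z$ down up to the action of $\operatorname{Aut}(Y)$ and forcing $Z \cong TY$. This gives $TY \in \cC^{\omega}$ for indecomposable $Y$, hence for all compacts by additivity and Krull--Schmidt, so $T$ corestricts to $\cC^{\omega}$; together with the full faithfulness established above and essential surjectivity (every indecomposable of $\cC^{\omega}$ arises as the third term $Z \cong TY$ of some Auslander--Reiten sequence), the corestriction is an equivalence.

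The step I expect to be the main obstacle is the identification $Z \cong TY$ together with the verification of the Auslander--Reiten conditions, since this is exactly where one must translate between the representability characterization of $T$ and the intrinsic, factorization-theoretic definition of an Auslander--Reiten sequence. It is also the step in which the local structure of $\End_{\cC}(Y)$---and thus both the completeness of $k$ and the finiteness hypothesis---is genuinely used.
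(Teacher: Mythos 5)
First, a point of reference: the paper itself contains no proof of this statement --- it is explicitly a summary of Krause's results, cited as \cite{krauseartzimmermann}, so your proposal has to be measured against that argument rather than against anything in the paper. Your first half matches the cited source: the Brown representability construction of $T$, full faithfulness via the double application of \eqref{eq:arduality} together with Matlis biduality $\Hom_{\cC}(X',X)\cong DD\Hom_{\cC}(X',X)$ on finitely generated $k$-modules, and the $(1)\Rightarrow(2)$ construction from a functional $\phi\colon\End_{\cC}(Y)\to E$ vanishing on the radical (with locality of $\End_{\cC}(Y)$ coming from semiperfectness) are all correct in substance. One caveat there: ``condition (3) is the formal dual'' is not free, since the datum $\phi$ is not symmetric in $Y$ and $TY$; what one actually invokes is the lemma that a non-split triangle satisfying conditions (1) and (2) is already an Auslander--Reiten triangle once its first term has local endomorphism ring, applied via $\End_{\cC}(TY)\cong DD\End_{\cC}(Y)\cong\End_{\cC}(Y)$.

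The genuine gap is in $(2)\Rightarrow(1)$, in two places. First, the identification $Z\simeq TY$ is not the standard uniqueness-of-AR-triangles argument, because it is a cross-category comparison: the triangle supplied by hypothesis (2) satisfies its lifting properties only against test objects of $\cC^{\omega}$, whereas $TY$ is characterized by a universal property over all of $\cC$, and the middle term of the ``big'' AR triangle ending at $Y$ need not be compact, so the two triangles cannot be compared by lifting maps in both directions. Krause bridges this with purity: one shows that $\Hom_{\cC}(-,Z)$ restricted to $\cC^{\omega}$ contains the simple functor concentrated at $Y$ and is in fact its injective envelope $D\Hom_{\cC}(Y,-)|_{\cC^{\omega}}$ in the category of additive functors $(\cC^{\omega})^{\op}\to\Ab$, and then uses that $TY$ is pure-injective and that restricted Yoneda is fully faithful on pure-injective objects to conclude $Z\simeq TY$. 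You correctly flag this as the main obstacle, but no argument is supplied.

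Second, and more seriously, essential surjectivity. Your parenthetical justification --- ``every indecomposable of $\cC^{\omega}$ arises as the third term $Z\simeq TY$ of some Auslander--Reiten sequence'' --- is not what hypothesis (2) gives you if, as in \Cref{defn:arsequence}, ``admits Auslander--Reiten sequences'' is read as: every indecomposable occurs as the \emph{end} term $Y$. From end-term existence alone you only obtain that $T$ corestricts to a fully faithful endofunctor of $\cC^{\omega}$, and fully faithful endofunctors of Krull--Schmidt categories need not be essentially surjective. This is not a removable technicality. Take $\cC=\cD_A$ for a finite-dimensional algebra $A$ over a field $k$: then $T$ is the Nakayama functor $-\otimes^{\mathbf{L}}_A DA$ on perfect complexes, the condition $T(\cC^{\omega})\subseteq\cC^{\omega}$ amounts to $DA$ being perfect (finiteness of the injective dimension of $A$ on one side), while essential surjectivity amounts to the injective dimension being finite on the other side; whether the first implies the second is precisely the open Gorenstein symmetry problem. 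So the implication $(2)\Rightarrow(1)$ cannot be proved under the end-term-only reading; the theorem is correct when ``admits Auslander--Reiten sequences'' is read two-sidedly (every indecomposable occurs as the end term of such a sequence and, up to shift, as the first term), and under that reading your essential-surjectivity step becomes legitimate --- but it is then a use of the hypothesis that must be made explicit, not a consequence folded into a parenthesis. Note that this defect does not touch the direction $(1)\Rightarrow(2)$, which is the only one the paper actually uses.
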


\begin{rem}
Although none of the results in this paper require a homotopical enhancement, the basic properties of Auslander--Reiten sequences can be established on the level of stable $\infty$-categories, as worked out in the thesis of West~\cite{westart}.
\end{rem}

\subsection{The generating hypothesis}\label{ssec:abstractgh}

Throughout this section, let $\cC$ be a compactly generated stable homotopy category with tensor unit $S$. By definition, the homotopy groups of an object $X \in \cC$ are given by $\pi_*X = \Hom_{\cC}(\Sigma^*S,X)$. This construction extends via postcomposition to an additive functor 
\[
\xymatrix{\pi_*\colon \cC \ar[r] & \Mod_{\pi_*S}.}
\]
Suppose $\cD$ is a full subcategory of $\cC$. The generating hypothesis for $\cD$ asserts that $\pi_*$ is faithful when restricted to $\cD$.

\begin{gh}
Any morphism $f\colon X \to Y$ in $\cD$ with $\pi_*f = 0$ is null. 
\end{gh}

As explained in the introduction, the generating hypothesis was originally proposed in the context of stable homotopy theory by Freyd in~\cite{freydgh}, but has since been studied in other, structurally similar stable homotopy categories. 

\begin{ex}
The derived category $\cD_R$ of a ring $R$ is compactly generated by thick subcategory of perfect $R$-modules $\Perf_R=\cD_R^{\omega}$. By definition, the generating hypothesis holds for $R$ if the homology functor
\[
\xymatrix{H_*\colon \Perf_R \ar[r] & \Mod_R}
\] 
is faithful. In~\cite{hlpgenhypdermod}, Hovey, Lockridge, and Puninski provide a complete classification of those rings which satisfy the generating hypothesis. Indeed, the generating hypothesis holds for $R$ if and only if the following two conditions are satisfied:
\begin{enumerate}
	\item The ring $R$ has flat dimension at most 1.
	\item Every short exact sequence of finitely presented $R$-modules splits.  
\end{enumerate}
In particular, in this case $R$ is a von Neumann ring, but the converse does not hold. 
\end{ex}

\begin{ex}
Suppose $G$ is a finite group and $k$ is a field of characteristic $p$ dividing the order of $G$. The stable module category $\StMod_{kG}$ over the group algebra $kG$ is compactly generated by the simple $kG$-modules, and the homotopy groups with respect to shifts of the tensor unit $k$ can be identified with Tate cohomology. Building on earlier joint work with Benson and Christensen~\cite{bcc_genhyppgroup}, Carlson, Chebolu, and Min{\'a}{\v{c}} prove in \cite{ccmgenhypstmod} that the generating hypothesis holds in $\StMod_{kG}^{\omega}$ if and only if a $p$-Sylow subgroup of $G$ is cyclic of order 2 or 3. 
\end{ex}

Before we can state our criterion for the existence of counterexamples to the generating hypothesis, we need to introduce one more piece of terminology; note, however, that this notation is non-standard and will only be used in the remainder of this section. 

\begin{defn}
An object $X \in \cC$ is said to be non-projective if it is not a retract of a direct sum of shifts of $S$. 
\end{defn}

The next argument generalizes the proof of the main theorem of \cite{ccmgenhypstmod}.

\begin{prop}\label{prop:argenhyp}
Suppose $(\cC,S)$ is a compactly generated stable homotopy category, and let $\cD \subseteq \cC$ be a thick subcategory that admits Auslander--Reiten sequences. If $\cD$ contains non-projective indecomposables, then the generating hypothesis does not hold in $\cD$.
\end{prop}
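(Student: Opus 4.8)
The plan is to take the Auslander--Reiten sequence ending at a non-projective indecomposable and show that its connecting map is a nonzero morphism of $\cD$ killed by $\pi_*$. Fix an indecomposable non-projective object $Y \in \cD$ and, rotating the Auslander--Reiten sequence supplied by the hypothesis, display it as a cofiber sequence
\[
\Sigma^{-1}Z \xrightarrow{f} X \xrightarrow{g} Y \xrightarrow{w} Z,
\]
with $X, Z \in \cD$ and $w$ the connecting map. I claim that $w$ is the desired counterexample to the generating hypothesis in $\cD$.

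First I would prove $\pi_* w = 0$. Applying $\pi_* = \Hom_{\cC}^*(S,-)$ to this cofiber sequence produces a long exact sequence, and exactness at $\pi_* Y$ shows that $\pi_* w = 0$ is equivalent to surjectivity of $\pi_* g \colon \pi_* X \to \pi_* Y$. To establish this surjectivity, represent an arbitrary homogeneous element of $\pi_* Y$ by a map $\alpha \colon \Sigma^i S \to Y$. If $\alpha$ were a split epimorphism, then $Y$ would be a retract of $\Sigma^i S$, hence a retract of a direct sum of shifts of $S$, contradicting the non-projectivity of $Y$. Therefore $\alpha$ is not a split epimorphism, so by condition (2) of \Cref{defn:arsequence} it factors through $g$. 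Thus every element of $\pi_* Y$ lifts along $\pi_* g$, which gives the surjectivity and hence $\pi_* w = 0$.

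Next I would show $w \neq 0$. In any triangulated category the connecting map of a distinguished triangle vanishes precisely when the triangle splits; since condition (1) of \Cref{defn:arsequence} asserts that the Auslander--Reiten sequence does not split, we conclude $w \neq 0$. As $Y, Z \in \cD$ and $\cD$ is a full subcategory of $\cC$, the map $w$ is a genuine nonzero morphism of $\cD$ satisfying $\pi_* w = 0$, so the generating hypothesis fails in $\cD$.

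The argument is short once the connecting map is identified as the source of the counterexample, and I do not anticipate a serious obstacle. The one point that merits care is the correct deployment of the two Auslander--Reiten axioms: condition (2) drives the surjectivity of $\pi_* g$ and hence $\pi_* w = 0$, while condition (1) supplies non-triviality and hence $w \neq 0$. One must also keep straight the equivalence between vanishing of the connecting map and splitting of the triangle, so that the non-splitting hypothesis is invoked in the right direction; the translation of \emph{non-projective} into the statement that no map from a shift of $S$ onto $Y$ splits is the only substantive input beyond formal manipulations.
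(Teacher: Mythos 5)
Your proposal is correct and follows essentially the same argument as the paper: non-projectivity of the indecomposable $Y$ ensures no map $\Sigma^m S \to Y$ is a split epimorphism, so condition (2) of \Cref{defn:arsequence} lifts every such map through $X \to Y$, killing $\pi_*$ of the connecting map, while condition (1) guarantees the connecting map is nonzero. The only cosmetic difference is that you route the vanishing of $\pi_*w$ through exactness of the long exact sequence, whereas the paper observes directly that the factorization forces $\delta \circ f = 0$.
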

\begin{proof} 
Choose an indecomposable object $Y$ in $\cD$ which is not a retract of a direct sum of shifts of $S$. This implies that $Y$ fits into an Auslander--Reiten sequence
\begin{equation}\label{eq:arseq}
\xymatrix{\Sigma^{-1}Z \ar[r] & X \ar[r] & Y \ar[r]^-{\delta} & Z}
\end{equation}
for some $X,Z \in \cD$. We claim that the morphism $\delta$ falsifies the generating hypothesis in $\cD$. Indeed, by definition, the sequence \eqref{eq:arseq} is not split and thus $\delta \ne 0$. It remains to show that $\pi_*\delta = 0$, so let $\Sigma^m S \to Y$ be some map with $m \in \Z$; note that $f$ cannot be a split epimorphism by assumption on $Y$. It follows that there is a lift 
\[
\xymatrix{& & \Sigma^m S \ar[d]^f \ar@{-->}[dl] \\ 
\Sigma^{-1}Z \ar[r] & X \ar[r] & Y \ar[r]^-{\delta} & Z,}
\]
forcing $\delta \circ f =0$. 
\end{proof}

\begin{rem}
As the proof shows, the existence of a sequence \eqref{eq:arseq} which satisfies conditions (1) and (2) of \Cref{defn:arsequence} suffices. However, in practice, this does not add much extra flexibility. 
\end{rem}

We conclude this section with an extended example demonstrating how \Cref{prop:argenhyp} can be used to construct counterexamples to the generating hypothesis for specific categories. 

\begin{ex}\label{ex:qcohgh}
Let $X$ be an $n$-dimensional smooth projective scheme over a field $k$ and let $\cD(X)$ be the derived category of quasi-coherent sheaves on $X$. This category is compactly generated by the bounded derived category $\cD^b(\Coh_X)$ of coherent sheaves on $X$, and $\pi_*\cF = H^{-*}\cF$ for all $\cF \in \cD(X)$. Serre duality provides a factorization
\[
\xymatrixcolsep{6pc}
\xymatrix{\cD^b(\Coh_X) \ar[r]^-{T(-) = -\otimes_{\cO_X}\Sigma^n\omega_X} \ar@{-->}[d]_-{\sim} & \cD(X)\\
\cD^b(\Coh_X), \ar@{^{(}->}[ru]}
\]
where $\omega_X \cong \Omega_{X/k}^n$ is the dualizing sheaf on $X$. As proven in \cite[Ex.~3]{krauseartzimmermann}, the functor $T$ is the Auslander--Reiten translation for $\cD(X)$, so $\cD(X)^{\omega} \simeq \cD^b(\Coh_X)$ admits Auslander--Reiten sequences by \Cref{thm:art}; see also \cite{jorgensenartschemes}. 

Note that, for non-trivial $X$, the category $\cD^b(\Coh_X)$ is not semi-simple, i.e., not every object is a retract of a direct sum of shifts of the structure sheaf $\cO_X$; for example, $\cO(1)$ is an indecomposable non-projective object. By Serre's finiteness theorem, $\Hom_X(\cF,\Sigma^m\cG) \cong H^m(\cF^{\vee} \otimes \cG)$ is finitely generated for all $\cF, \cG \in \cD^b(\Coh_X)$ and all $m \in \Z$, so $\cD^b(\Coh_X)$ is a Krull--Schmidt category by \cite[Cor.~9.9]{beligiannisart}. 

Since $\cD^b(\Coh_X)$ is not semi-simple, there exists an object $\cF \in \cD^b(\Coh_X)$ which is not a retract of direct sum of shifts of $\cO_X$; without loss of generality, we may assume that $\cF$ is indecomposable. Applying \Cref{prop:argenhyp} to $\cF$ then produces a map 
\[
\xymatrix{\delta\colon \cF \ar[r] & T(\cF)}
\]
that contradicts the generating hypothesis for $\cD^b(\Coh_X)$. Consequently, the generating hypothesis holds for $\cD^b(\Coh_X)$ if and only if $\cD^b(\Coh_X)$ is semi-simple, i.e., if and only if $X$ is 0-dimensional. 
Alternatively, this result can also be deduced from \cite[Prop.~5.2]{hgenhypdermod}. 
\end{ex}

\begin{rem}
In \cite[Ex.~3.12]{hgenhypdermod}, Lockridge studies the generating hypothesis for $\Thick(\cO_X)$, where $X$ is a finite-dimensional Noetherian scheme with enough locally free sheaves. In this case, the generating hypothesis holds if and only if $H^*(X,\cO_X)$ is a finite product of fields, e.g., for $X$ a smooth Fano variety. 
\end{rem}

\section{The $K(n)$-local generating hypothesis}\label{sec:kngh}

In this section, we work in the $p$-local stable homotopy category $\Sp$, for a fixed prime $p$. Let $n\ge 0$ be an integer and let $K(n)$ and $E_n$ denote Morava $K$-theory and Morava $E$-theory at height $n$, respectively. The $K(n)$-local stable homotopy category $\Sp_{K(n)}$ arises as the Bousfield localization of $\Sp$ with respect to $K(n)$. The $K(n)$-localization of the usual smash product induces a symmetric monoidal structure $\otimes$ on $\Sp_{K(n)}$ with unit $L_{K(n)}S^0$. Specializing the abstract generating hypothesis considered in \Cref{ssec:abstractgh} to this setting, we obtain:

\begin{kngh}
If $f\colon X \to Y$ is a morphism in $\Sp_{K(n)}^{\omega}$ with $\pi_*f = 0$, then $f=0$. 
\end{kngh}

Rational homotopy theory implies that $\Sp_{K(0)} \simeq \Sp_{\Q}$ is equivalent to the category of graded rational vector spaces, so the $K(n)$-local generating hypothesis holds trivially at height 0. The goal of this section is to systematically construct counterexamples for all heights $n>0$ and all primes $p$. 

\begin{rem}
The status \cite{mrstelescopeconj} of the telescope conjecture required a modification of Devinatz's and Hopkins' original approach to the generating hypothesis. Assuming a weak form of the chromatic splitting conjecture, Devinatz formulates a series of conjectures about the generating hypothesis in the $E_n$-local and $K(n)$-local stable homotopy category. These are related to the $K(n)$-local generating hypothesis, but it appears that our results do not affect his conjectures. 
\end{rem}

\subsection{Recollections on the $K(n)$-local stable homotopy category}

The $K(n)$-local stable homotopy category $\Sp_{K(n)}$ is a stable homotopy category with a number of interesting properties, which we review here for the convenience of the reader; our main reference is \cite{hovey_morava_1999}. 

The category $\Sp_{K(n)}$ is compactly generated by $F_n=L_{K(n)}F(n)$, the $K(n)$-localization of any finite type $n$ spectrum $F(n)$. It follows that
\[
\Sp_{K(n)}^{\omega} = \Thick_{\Sp_{K(n)}}(F_n).
\]
An object $X \in \Sp_{K(n)}$ is compact if and only if $E_n^*X$ is degreewise finite; if this is the case, then $\pi_*X$ is also degreewise finite. Note that the tensor unit $L_{K(n)}S^0$ is dualizable, but not compact unless $n=0$. In fact, an object $X$ is dualizable in $\Sp_{K(n)}$ if and only if $K(n)_*X$ is finite, and we will write $\Sp_{K(n)}^{\dual} \subset \Sp_{K(n)}$ for the thick subcategory of dualizable objects. Interestingly, $K(n)$-locally dualizable objects satisfy the Krull--Schmidt theorem, as shown in~\cite[Prop.~12.13]{hovey_morava_1999}.

\begin{prop}[Hovey--Strickland]\label{prop:knkrullschmidt}
The Krull--Schmidt theorem holds in $\Sp_{K(n)}^{\dual}$, i.e., any dualizable $K(n)$-local spectrum $X$ admits a decomposition 
\[
X \simeq \bigoplus_{Y\, \mathrm{indecomp}}Y^{n_{Y}}
\]
into a direct sum of indecomposable dualizable objects, for unique integers $n_Y \ge 0$ almost all of which are $0$. 
\end{prop}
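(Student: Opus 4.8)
The plan is to prove the statement via the Krull--Schmidt--Remak--Azumaya theorem, which asserts that in any idempotent-complete additive category, an object that decomposes as a finite direct sum of objects with local endomorphism rings admits such a decomposition uniquely, up to reordering and equivalence. Since $\Sp_{K(n)}$ is a stable homotopy category, idempotents split, so it suffices to establish two facts for the full subcategory $\Sp_{K(n)}^{\dual}$: (i) every dualizable $X$ admits a finite decomposition into indecomposable dualizable objects, and (ii) every indecomposable dualizable object has local endomorphism ring $\End(X) = \pi_0 F(X,X)$.

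For (i), I would use $K(n)$-homology as a length bound. As $K(n)_*$ is a graded field, $K(n)_*X$ is a finite-dimensional graded $K(n)_*$-vector space for dualizable $X$, and any decomposition $X \simeq \bigoplus_i X_i$ induces $K(n)_*X \cong \bigoplus_i K(n)_*X_i$. Each nonzero summand $X_i$ is itself dualizable, hence $K(n)$-local; were $K(n)_*X_i = 0$ it would be $K(n)$-acyclic and therefore zero, so every nonzero summand contributes at least one dimension. Thus the number of summands in any decomposition is bounded by $\dim_{K(n)_*} K(n)_*X$, and the process of splitting off decomposable summands must terminate, yielding a finite decomposition into indecomposables.

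The heart of the argument, and the main obstacle, is (ii): controlling the ring $R = \End(X)$. Here I would exploit the special structure of the $K(n)$-local category, namely that the endomorphism ring of a dualizable object is profinite, i.e. an inverse limit of finite rings. Concretely, $F(X,X) \simeq X \otimes X^{\vee}$ is again dualizable, and the completeness of the $K(n)$-local sphere---reflecting the adic topology on $(E_n)_*$ and the continuous action of the Morava stabilizer group---propagates to $\pi_0 F(X,X)$, exhibiting $R$ as complete and separated with respect to a filtration whose successive quotients are finite, being controlled by the finite object $K(n)_*X$. Making this completeness precise is the crux, and is exactly where the structural input of \cite{hovey_morava_1999} is required; the remaining steps are formal.

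Granting that $R$ is profinite, I would finish as follows. A profinite ring is complete, so idempotents lift along the reduction $R \to R/J(R)$, and $R/J(R)$ is a profinite semisimple ring. If $X$ is indecomposable then $R$ has no nontrivial idempotents, hence neither does $R/J(R)$; a semisimple ring without nontrivial idempotents is a division ring, so $R$ is local. This establishes (ii). Combined with (i) and idempotent-completeness, the Krull--Schmidt--Azumaya theorem then yields both the existence of the decomposition and the uniqueness of the multiplicities $n_Y$.
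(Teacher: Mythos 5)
The paper does not actually prove this proposition: it is imported wholesale from \cite[Prop.~12.13]{hovey_morava_1999}, so the only meaningful comparison is with the argument given there. Your outline reproduces the structure of that argument essentially exactly: existence of finite decompositions from the bound $\dim_{K(n)_*}K(n)_*X$, locality of endomorphism rings of indecomposables via a profiniteness property, and the Krull--Schmidt--Azumaya theorem to conclude existence and uniqueness. Your step (i) is correct and complete as written: retracts of dualizable objects are dualizable, and a $K(n)$-local, $K(n)$-acyclic spectrum vanishes, so $\dim_{K(n)_*}K(n)_*X$ bounds the length of any decomposition.

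The problem is that what you defer is precisely the entire non-formal content of the proposition. The profiniteness of $R=\End(X)$ is not something that ``propagates'' from completeness of the $K(n)$-local sphere by a formal mechanism; it requires the tower of generalized Moore spectra. The actual argument is: any $K(n)$-local $X$ satisfies $X \simeq \lim_I (X \otimes S^0/I)$, where $S^0/I$ runs over a cofinal tower of generalized Moore spectra of type $n$; for $X$ dualizable each $X \otimes S^0/I$ is compact (dualizable tensor compact is compact), so each group $[X,\, X \otimes S^0/I]$ is finite; the Milnor sequence then has vanishing $\lim^1$ by Mittag--Leffler, giving $\End(X) \cong \lim_I [X,\, X\otimes S^0/I]$, and the kernels of the maps to these finite quotients are two-sided ideals of finite index, exhibiting $\End(X)$ as a profinite ring. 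Without this input your proof does not start, so as it stands there is a genuine gap, even though you correctly localize where it sits. A secondary, fixable imprecision occurs in your step (ii): for a profinite ring $R$, the quotient $R/J(R)$ is semiprimitive and profinite but need \emph{not} be semisimple artinian (consider $\prod_{i\in\mathbb{N}}\F_p$), so the fact ``a semisimple ring without nontrivial idempotents is a division ring'' is not the one you may invoke. What you need is the structure theorem that a semiprimitive profinite ring is a product of matrix rings over finite fields; absence of nontrivial idempotents (after lifting them along $R \to R/J(R)$, which is valid for profinite rings) then forces a single $1\times 1$ factor, i.e., a finite field, whence $R$ is local. With these two repairs your argument is a faithful reconstruction of the Hovey--Strickland proof.
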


Moreover, Brown's representability theorem holds for $\Sp_{K(n)}$, so the Brown--Comenetz duality functor $I$ can be lifted from $\Sp$ to a functor
\[
\xymatrix{\hat{I}\colon \Sp_{K(n)}^{\op} \ar[r] & \Sp_{K(n)}.}
\]
More explicitly, for any $X \in \Sp_{K(n)}$, these functors are related by the formula
\[
\hat{I}X \simeq F(M_nX,I),\] 
where $F$ denotes the internal mapping object of $\Sp$ and $M_nX$ is the $n$-th monochromatic layer of $X$. The properties of the local Brown--Comenetz duality functor $\hat{I}$ we need are summarized in the following result, proven in \cite[Thm.~10.2]{hovey_morava_1999}.

\begin{prop}[Hovey--Strickland]\label{prop:knbrowncomenetz}
There exists a cohomological functor $\hat{I}\colon \Sp_{K(n)}^{\op} \to \Sp_{K(n)}$ which is characterized by 
\begin{equation}\label{eq:bcduality}
\Hom_{\Z_p}(\pi_*(M_nX\otimes Y),\Q_p/\Z_p) \cong \Hom_{\Sp_{K(n)}}^{-*}(Y,\hat{I}X) 
\end{equation}
for all $X,Y \in \Sp_{K(n)}$. Furthermore, $\hat{I}$ restricts to a duality (contravariant equivalence) on the full subcategory $\Sp_{K(n)}^{\omega}$ of compact objects as well as the full subcategory $\Sp_{K(n)}^{\dual}$ of dualizable spectra. 
\end{prop}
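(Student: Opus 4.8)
The plan is to follow the strategy of Hovey and Strickland: exhibit $\hat I$ by an explicit formula, verify the characterizing adjunction \eqref{eq:bcduality} together with $K(n)$-locality, and then upgrade $\hat I$ to a duality on compact and dualizable spectra using finiteness of Morava $K$-homology. Throughout I write $I$ for the global ($p$-local) Brown--Comenetz dual of the sphere, characterized by $\Hom_{\Sp}^{-*}(Z,I)\cong\Hom_{\Z_p}(\pi_*Z,\Q_p/\Z_p)$, and I use repeatedly that $\Q_p/\Z_p$ is an injective cogenerator of $\Z_p$-modules, so that $I$ \emph{detects} homotopy: $\pi_*Z=0$ if and only if $\Hom_{\Z_p}(\pi_*Z,\Q_p/\Z_p)=0$. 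The structural fact underlying everything is that $M_n=\fib(L_n\to L_{n-1})$ is a \emph{smashing} operation, since $L_n$ and $L_{n-1}$ are smashing localizations; hence $M_nX\simeq M_nS^0\wedge X$, and more generally $A\wedge M_nX\simeq M_n(A\wedge X)$ for any spectrum $A$.

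First I would set $\hat IX:=F(M_nX,I)$ and check that it is $K(n)$-local. It suffices to show $[A,F(M_nX,I)]\cong[A\wedge M_nX,I]=0$ for every $K(n)$-acyclic $A$; by the detection property this amounts to $\pi_*M_n(A\wedge X)=0$. But $K(n)_*A=0$ forces $K(n)_*(A\wedge X)=0$ by the Künneth isomorphism for the field spectrum $K(n)$, and any spectrum with trivial Morava $K$-homology has vanishing monochromatic layer because $M_n\simeq M_nL_{K(n)}$ and $L_{K(n)}(A\wedge X)\simeq 0$. Since $F(-,I)$ and $M_n$ are exact, this makes $\hat I\colon\Sp_{K(n)}^{\op}\to\Sp_{K(n)}$ a cohomological functor. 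To verify \eqref{eq:bcduality}, I would compute, for $K(n)$-local $Y$,
\[
\Hom^{-*}_{\Sp_{K(n)}}(Y,\hat IX)\cong\Hom^{-*}_{\Sp}(Y\wedge M_nX,I)\cong\Hom_{\Z_p}(\pi_*(M_nX\otimes Y),\Q_p/\Z_p),
\]
where the first step uses the $K(n)$-locality of $\hat IX$ and the $\otimes$--$F$ adjunction, the second is the defining property of $I$, and I use the identity $M_nX\wedge Y\simeq M_n(X\otimes Y)$ (again by the smashing property together with $M_n\simeq M_nL_{K(n)}$) to match the displayed form. Brown representability in the compactly generated category $\Sp_{K(n)}$ then shows this universal property determines $\hat IX$ up to canonical equivalence and renders $X\mapsto\hat IX$ functorial.

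For the duality statements I would use the smashing identity once more to rewrite $\hat IX\simeq F(X,I_n)$ with $I_n:=\hat I(L_{K(n)}S^0)=F(M_nS^0,I)$, valid for all $X$; for dualizable $X$ this gives $\hat IX\simeq X^{\vee}\otimes I_n$. As Spanier--Whitehead duality $X\mapsto X^{\vee}$ is a contravariant self-equivalence of $\Sp_{K(n)}^{\dual}$, it remains to show $-\otimes I_n$ is an equivalence, i.e.\ that $I_n$ is invertible; equivalently, by the detection of invertible objects through Morava $K$-homology, that $K(n)_*I_n$ is free of rank one over $K(n)_*$. Granting this, the double-duality map $X\to\hat I\hat IX$ is a $K(n)_*$-isomorphism and hence an equivalence on $\Sp_{K(n)}^{\dual}$ by $K(n)$-local Whitehead. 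For $\Sp_{K(n)}^{\omega}$ one argues the same way, additionally using that $\pi_*X$ is degreewise finite for compact $X$ so that Pontryagin duality into $\Q_p/\Z_p$ is perfect, together with the observation that $\hat IF_n\simeq F_n^{\vee}\otimes I_n$ is again compact (so the equivalence $-\otimes I_n$ preserves $\Thick_{\Sp_{K(n)}}(F_n)=\Sp_{K(n)}^{\omega}$).

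The main obstacle I anticipate is precisely the invertibility of $I_n$, equivalently the assertion that $\hat I$ is a genuine duality rather than merely a cohomological functor. This is the Gross--Hopkins phenomenon, and the technical heart is the computation of $K(n)_*I_n$, showing that $\hat I$ induces on Morava $K$-homology a perfect, suitably twisted, $K(n)_*$-linear dual that is free of rank one; the finiteness of $K(n)_*$ of every dualizable object is what makes this dual perfect and hence upgrades the formal adjunction \eqref{eq:bcduality} to a contravariant equivalence. Everything else reduces to the formal interplay between the smashing operation $M_n$, the localization $L_{K(n)}$, and the homotopy-detecting property of the global Brown--Comenetz dual $I$.
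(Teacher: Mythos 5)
The first thing to note is that the paper contains no proof of this proposition: it is taken as a black box from Hovey--Strickland \cite[Thm.~10.2]{hovey_morava_1999}, and the duality-theoretic input behind it (dualizability, indeed invertibility, of $\hat{I}S^0$) goes back to Gross--Hopkins, which the paper cites separately in the remark following \Cref{thm:localart}. So your proposal has to be judged as a standalone reconstruction, and as such its formal half is sound but its second half has a genuine gap.

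The formal half is correct: the definition $\hat{I}X = F(M_nX,I)$, the $K(n)$-locality check via the smashing identity $M_nX \simeq M_nS^0 \wedge X$ together with $M_n \simeq M_nL_{K(n)}$ and the K\"unneth isomorphism, and the derivation of \eqref{eq:bcduality} from the $\otimes$--$F$ adjunction are all fine; you also correctly resolve the ambiguity in \eqref{eq:bcduality} by reading $M_nX \otimes Y$ as $M_nX \wedge Y \simeq M_n(X\otimes Y)$ rather than as a localized smash product. The gap is that both duality statements are reduced to the invertibility of $I_n = \hat{I}(L_{K(n)}S^0)$, which you then ``grant.'' This cannot be deferred, because for the dualizable part it is not a lemma feeding into the statement but is \emph{equivalent} to it: if $\hat{I} \simeq (-)^{\vee}\otimes I_n$ restricts to a contravariant self-equivalence of $\Sp_{K(n)}^{\dual}$, then essential surjectivity produces a dualizable $W$ with $W^{\vee}\otimes I_n \simeq L_{K(n)}S^0$, which is precisely the invertibility of $I_n$. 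So your reduction carries the entire non-formal content of the proposition. Likewise, the parenthetical claim that $\hat{I}F_n \simeq F_n^{\vee}\otimes I_n$ is compact --- which is needed even for $\hat{I}$ to restrict to a functor on $\Sp_{K(n)}^{\omega}$ --- already presupposes finiteness of $K(n)_*I_n$, plus an unstated K\"unneth/finiteness argument showing that a dualizable object smashed with a finite type $n$ complex is compact. A complete proof must actually establish the finiteness (rank-one) property of $K(n)_*I_n$; that is where all the work lies in Hovey--Strickland (their finiteness theory for $E$-homology together with the Gross--Hopkins theorem), and it is exactly the step missing here.
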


\subsection{Auslander--Reiten sequences in $\Sp_{K(n)}$}

Employing the methods developed in \Cref{sec:abstract}, we now construct a version of Auslander--Reiten sequences for the $K(n)$-local category, and use them to produce counterexamples to the $K(n)$-local generating hypothesis. 

\begin{thm}\label{thm:localart}
The category $\Sp_{K(n)}^{\omega}$ of compact $K(n)$-local spectra admits Auslander--Reiten sequences for indecomposable objects $Y$:
\begin{equation}\label{eq:knarseq}
\xymatrix{\Sigma^{-1}TY \ar[r] & X \ar[r] & Y \ar[r]^-{\delta} & TY.}
\end{equation}
Moreover, the Auslander--Reiten translation $T$ is given by the dual of the Brown--Comenetz duality functor, i.e., $T \simeq \hat{I}\circ(-)^{\vee}$. 
\end{thm}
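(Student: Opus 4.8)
The plan is to apply the abstract machinery of \Cref{thm:art} to $\cC = \Sp_{K(n)}$, which requires verifying its hypotheses and then identifying the Auslander--Reiten translation explicitly. First I would record that $\Sp_{K(n)}$ is a $k$-linear compactly generated stable homotopy category for $k = \Z_p$ (or, more naturally, the appropriate complete local coefficient ring coming from the $E_n$-cohomology of the unit); the $\Z_p$-linear structure is what appears in the Brown--Comenetz formula \eqref{eq:bcduality}. The crucial finiteness hypothesis of \Cref{thm:art}---that $\Hom_{\Sp_{K(n)}}(X,Y)$ is finitely generated over $k$ for all compact $X,Y$---follows from the recollection that compact objects have degreewise finite $E_n$-homology, hence degreewise finite homotopy, so that the relevant Hom-groups are finite abelian $p$-groups and in particular finitely generated over $\Z_p$. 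Granting this, \Cref{thm:art} immediately produces an Auslander--Reiten translation $T\colon \Sp_{K(n)}^\omega \to \Sp_{K(n)}$ satisfying the Auslander--Reiten formula.

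The heart of the theorem is the identification $T \simeq \hat I \circ (-)^\vee$ and the verification that this $T$ lands in (equivalently, restricts to an autoequivalence of) the compacts, so that condition (1) of \Cref{thm:art} holds and Auslander--Reiten sequences exist. The strategy here is to check that the composite functor $\hat I \circ (-)^\vee$ satisfies the defining binatural equivalence \eqref{eq:arduality} of \Cref{defn:artranslation}; since the Auslander--Reiten translation characterized by that formula is unique up to natural equivalence, matching the two formulas suffices. Concretely, for $X \in \Sp_{K(n)}^\omega$ and $Y \in \Sp_{K(n)}$ I would compute
\[
\Hom_{\Sp_{K(n)}}(Y, \hat I(X^\vee)) \cong D\Hom_{\Sp_{K(n)}}(X,Y),
\]
where $D = \Hom_{\Z_p}(-,\Q_p/\Z_p)$ is the Matlis dual over $\Z_p$ (note $E(\Z_p) = \Q_p/\Z_p$). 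This chain of identifications is assembled from \eqref{eq:bcduality} applied with the roles of the variables suitably arranged, together with the duality adjunction $\Hom_{\Sp_{K(n)}}(Y, F(X^\vee, -)) \cong \Hom_{\Sp_{K(n)}}(Y \otimes X^\vee, -)$ and the self-duality $(X^\vee)^\vee \simeq X$ for dualizable $X$; the monochromatic-layer functor $M_n$ acts as the identity on the relevant homotopy because we work $K(n)$-locally and $X$ is compact.

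Once the formula is matched, I would invoke \Cref{prop:knbrowncomenetz}, which asserts that $\hat I$ is a contravariant self-equivalence of $\Sp_{K(n)}^\omega$, together with the fact that Spanier--Whitehead duality $(-)^\vee$ is a contravariant self-equivalence of the compacts; their composite $\hat I \circ (-)^\vee$ is therefore a (covariant) autoequivalence of $\Sp_{K(n)}^\omega$. This is precisely condition (1) of \Cref{thm:art}, which by the equivalence stated there gives condition (2), namely that $\Sp_{K(n)}^\omega$ admits Auslander--Reiten sequences of the asserted shape \eqref{eq:knarseq}. The main obstacle I anticipate is purely bookkeeping rather than conceptual: carefully tracking the variance and the placement of the duals so that the Matlis dual $D$ over $\Z_p$ in \eqref{eq:arduality} is genuinely reproduced by the $\Q_p/\Z_p$-dual appearing in \eqref{eq:bcduality}, and confirming that the equivalence is binatural rather than merely a bijection on isomorphism classes. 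The deduction of the failure of the $K(n)$-local generating hypothesis for $n>0$ is then an application of \Cref{prop:argenhyp}, which needs only the existence of a non-projective indecomposable; since $\Sp_{K(n)}^\omega$ is not semi-simple for $n>0$ (the unit is not compact, so not every compact is a retract of shifts of $S$), such an object exists.
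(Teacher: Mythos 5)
Your proposal is correct and follows essentially the same route as the paper: verify the hypotheses of \Cref{thm:art} ($\Z_p$-linearity plus finiteness of Hom-groups between compacts, via degreewise finiteness of their homotopy), identify $T \simeq \hat I \circ (-)^\vee$ by matching the Auslander--Reiten formula \eqref{eq:arduality} against the Brown--Comenetz formula \eqref{eq:bcduality} using $M_nX \simeq X$ for compact $X$ and the Yoneda lemma, and then use \Cref{prop:knbrowncomenetz} together with Spanier--Whitehead duality on compacts to establish condition (1) of \Cref{thm:art}, hence the existence of Auslander--Reiten sequences.
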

\begin{proof}
We have to check that the conditions of \Cref{thm:art} hold for $\Sp_{K(n)}$, and may assume $n>0$. We have already observed that $\Sp_{K(n)}$ is compactly generated by $F_n=L_{K(n)}F(n)$ for any finite type $n$ complex $F(n)$. Since the homotopy groups of $L_{K(n)}S^0$ are $p$-complete, the natural map $S^0 \to L_{K(n)}S^0$ induces a ring homomorphism $\Z_p \to \End_{\Sp_{K(n)}}(L_{K(n)}S^0)$. Hence $\Sp_{K(n)}$ is $\Z_p$-linear. Furthermore, for any pair of compact objects $X$ and $Y$, the  group
\[
\Hom_{\Sp_{K(n)}}(X,Y) \cong \pi_0(X^{\vee} \otimes Y)
\]
is finite and thus in particular finitely generated over $\Z_p$. 

It remains to identify the Auslander--Reiten translation for $\Sp_{K(n)}$ and to show that it factors through the canonical inclusion $\Sp_{K(n)}^{\omega} \subset \Sp_{K(n)}$. The injective hull of $k=\Z_p/p$ is $\Q_p/\Z_p$, so the Auslander--Reiten formula \eqref{eq:arduality} becomes
\begin{equation}\label{eq:arbcduality}
\Hom_{\Z_p}(\pi_0(X^{\vee}\otimes Y),\Q_p/\Z_p) \cong \Hom_{\Sp_{K(n)}}(Y,TX)
\end{equation}
for $X \in \Sp_{K(n)}^{\omega}$ and $Y \in \Sp_{K(n)}$. Because $X$ is a retract of the $K(n)$-localization of a finite type $n$ spectrum, the cofiber sequence
\[
\xymatrix{M_nX \ar[r] & L_{n}X \ar[r] & L_{n-1}X \simeq 0}
\]
shows that $M_nX \simeq X$ and similarly for $X^{\vee}$; therefore:
\[
\Hom_{\Z_p}(\pi_0(M_nX\otimes Y),\Q_p/\Z_p) \cong \Hom_{\Z_p}(\pi_0(X\otimes Y),\Q_p/\Z_p).
\]
Replacing $X$ by $X^{\vee}$ in \eqref{eq:bcduality} and comparing this to \eqref{eq:arbcduality}, we see that $TX$ and $\hat{I}(X^{\vee})$ represent the same functor, so the Yoneda lemma implies $TX \simeq \hat{I}(X^{\vee})$ for all $X \in \Sp_{K(n)}^{\omega}$. Finally, \Cref{prop:knbrowncomenetz} provides a factorization
\[
\xymatrix{\Sp_{K(n)}^{\omega} \ar[r]^-{(-)^{\vee}}_-{\sim} \ar@{-->}[rd]_{\sim} & \Sp_{K(n)}^{\omega,\op} \ar[r]^-{\hat{I}} & \Sp_{K(n)} \\
& \Sp_{K(n)}^{\omega}, \ar@{^{(}->}[ru]_{\iota}}
\]
thereby finishing the proof. 
\end{proof}

\begin{rem}
Gross--Hopkins duality \cite{grosshopkinseqvb,grosshopkinsannouncement} identifies the Auslander--Reiten translation of $\Sp_{K(n)}$ more explicitly as
\[
TF \simeq \hat{I}DF \simeq F \otimes \hat{I}S^0
\]
for all $F \in \Sp_{K(n)}^{\omega}$, where $\hat{I}S^0$ is a $K(n)$-locally dualizable spectrum. 
\end{rem}

\begin{cor}\label{cor:localgenhyp}
The $K(n)$-local generating hypothesis does not hold for any $n>0$ and any prime $p$.
\end{cor}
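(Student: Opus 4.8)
The plan is to deduce the corollary by feeding \Cref{thm:localart} into \Cref{prop:argenhyp}. Fix $n>0$ and a prime $p$, and set $\cD=\Sp_{K(n)}^{\omega}$ with unit $S=L_{K(n)}S^0$. By \Cref{thm:localart} the category $\cD$ admits Auslander--Reiten sequences, so by \Cref{prop:argenhyp} it suffices to produce a non-projective indecomposable object of $\cD$, i.e.\ a nonzero indecomposable compact $K(n)$-local spectrum that is not a retract of a direct sum of shifts of $S$.

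The key observation is that, for $n>0$, the unit $S$ is dualizable but \emph{not} compact, and I claim this already forces \emph{every} nonzero compact object to be non-projective. Suppose a compact $Y$ were a retract of $P=\bigoplus_{i}\Sigma^{m_i}S$. First I would use compactness of $Y$ to factor the split inclusion $Y\to P$ through a finite sub-coproduct; since the retraction then also factors, $Y$ becomes a retract of a \emph{finite} sum $\bigoplus_{i\in F}\Sigma^{m_i}S$ of shifts of $S$. Now $S$ is indecomposable --- a splitting $S\simeq A\oplus B$ with $A,B\neq 0$ would split the rank-one graded $K(n)_*$-module $K(n)_*S\cong K(n)_*$ into two nonzero free summands, which is impossible --- so this finite sum is already a Krull--Schmidt decomposition into indecomposable dualizables. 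By the uniqueness part of \Cref{prop:knkrullschmidt}, the summand $Y$ is itself a direct sum of shifts of $S$; were $Y$ nonzero it would contain a shift of $S$ as a retract, and as compact objects are closed under retracts this would make $S$ compact, a contradiction. Hence $Y=0$.

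With this in hand the corollary is immediate: the generator $F_n=L_{K(n)}F(n)$ is a nonzero compact object, so by \Cref{prop:knkrullschmidt} it admits a nonzero indecomposable dualizable summand $Y\in\cD$. By the previous paragraph $Y$ is non-projective, so $\cD$ contains a non-projective indecomposable. \Cref{prop:argenhyp} then yields a nonzero map $\delta\colon Y\to TY$ with $\pi_*\delta=0$, contradicting the $K(n)$-local generating hypothesis.

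The only real content beyond quoting \Cref{thm:localart} and \Cref{prop:argenhyp} is the middle paragraph, and there the crux --- and the one place the hypothesis $n>0$ genuinely enters --- is the non-compactness of the unit together with the Krull--Schmidt theorem of \Cref{prop:knkrullschmidt}. I would note that one may instead argue via Morava $E$-theory: for compact $Y$ the module $E_n^{*}Y$ is degreewise finite, whereas a retract of a finite sum of shifts of $E_n^{*}S\cong E_n^{*}$ is a summand of a finite free module over the torsion-free ring $E_n^{0}$ and hence torsion-free, so that $E_n^{*}Y=0$ and $Y=0$. Either route makes precise the slogan that $\Sp_{K(n)}^{\omega}$ has no nonzero projective objects, so it is maximally far from semi-simple and \Cref{prop:argenhyp} applies.
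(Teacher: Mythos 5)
Your proof is correct and takes essentially the same route as the paper: it feeds \Cref{thm:localart} into \Cref{prop:argenhyp} and produces a non-projective indecomposable compact object from the indecomposability of $L_{K(n)}S^0$ (via the graded field $K(n)_*$), its non-compactness for $n>0$, and the Krull--Schmidt theorem of \Cref{prop:knkrullschmidt}. The only difference is one of detail: you explicitly carry out the reduction from arbitrary direct sums of shifts of the unit to finite ones (using compactness) and the Krull--Schmidt uniqueness step, which the paper's terser proof elides.
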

\begin{proof}
By \Cref{thm:localart} and \Cref{prop:argenhyp}, it suffices to show that $\Sp_{K(n)}^{\omega}$ contains an indecomposable object which is not equivalent to a retract of a shift of $L_{K(n)}S^0$. First note that, if $f\colon L_{K(n)}S^0 \to X$ is a split epimorphism, then $K(n)_*f$ is either $0$ or an isomorphism, because $K(n)_*L_{K(n)}S^0 = K(n)_*$ is a graded field. Therefore, $X \simeq L_{K(n)}S^0$ or $X\simeq 0$, hence $L_{K(n)}S^0$ is indecomposable. 

Let $Y \in \Sp_{K(n)}^{\omega}$ be any non-trivial object; without loss of generality, we may assume that $Y$ is indecomposable. Since $L_{K(n)}S^0$ is indecomposable but not compact in $\Sp_{K(n)}$ whenever $n>0$, $Y$ cannot be a retract of a shift of $L_{K(n)}S^0$, so the map $\delta$ in the Auslander--Reiten sequence \eqref{eq:knarseq} is a counterexample to the $K(n)$-local generating hypothesis.  
\end{proof}

This implies that the $E_n$-local generating hypothesis holds if and only if $n=0$. In other words, writing $\Sp_{E_n}$ for the $E_n$-local stable homotopy category, we have:

\begin{cor}
For any $p$, the functor $\pi_*\colon \Sp_{E_n}^{\omega} \to \Mod_{\Z_{(p)}}$ is faithful if and only if $n=0$. 
\end{cor}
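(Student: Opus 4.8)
The plan is to deduce the statement from \Cref{cor:localgenhyp} by transporting a $K(n)$-local counterexample into $\Sp_{E_n}$ along the monochromatic embedding. The case $n=0$ is immediate: since $\langle E_0\rangle=\langle K(0)\rangle$, the category $\Sp_{E_0}\simeq\Sp_{\Q}$ is the rational stable homotopy category, on which $\pi_*$ is an equivalence onto graded $\Q$-vector spaces and in particular faithful. It therefore remains to produce, for each $n>0$, a nonzero map of compact $E_n$-local spectra that $\pi_*$ annihilates.

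First I would set up the categorical input. As $E_n$-localization is smashing, $\Sp_{E_n}$ is compactly generated with compact unit $L_nS^0$, so $\Sp_{E_n}^{\omega}=\Thick_{\Sp_{E_n}}(L_nS^0)$; for a finite type $n$ complex $F(n)$ one has $M_nF(n)\simeq L_nF(n)\in\Sp_{E_n}^{\omega}$. The monochromatic layer gives a fully faithful embedding $M_n\Sp\hookrightarrow\Sp_{E_n}$ with essential image the $L_{n-1}$-acyclic objects, and the Hovey--Strickland equivalence $M_n\Sp\simeq\Sp_{K(n)}$ (induced by $M_n$ and $L_{K(n)}$, matching $M_nS^0\leftrightarrow L_{K(n)}S^0$) carries $\Sp_{K(n)}^{\omega}$ into $\Sp_{E_n}^{\omega}$. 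I would then choose, exactly as in the proof of \Cref{cor:localgenhyp}, an indecomposable $Y\in\Sp_{K(n)}^{\omega}$ that is not a retract of a shift of $L_{K(n)}S^0$, with Auslander--Reiten map $\delta\colon Y\to TY$ satisfying $\delta\neq0$ and $\pi_*^{K(n)}\delta=0$, and transport it to $M_n\delta\colon M_nY\to M_nTY$, a nonzero map of compact objects of $\Sp_{E_n}$.

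The crux is to compare the two homotopy theories. Writing $\pi_*^{E_n}(-)=[\Sigma^*L_nS^0,-]$, I claim that for every $Z\in\Sp_{K(n)}^{\omega}$ the counit $\epsilon\colon M_nS^0\to L_nS^0$ induces an isomorphism
\[
\epsilon^*\colon\pi_*^{E_n}(M_nZ)\xrightarrow{\ \cong\ }[\Sigma^*M_nS^0,M_nZ]\cong\pi_*^{K(n)}Z,
\]
the last isomorphism coming from the equivalence $M_n\Sp\simeq\Sp_{K(n)}$. By the cofiber sequence $M_nS^0\to L_nS^0\to L_{n-1}S^0$ this reduces to showing $[\Sigma^*L_{n-1}S^0,M_nZ]=0$, and the decisive computation uses that $Z$ is a retract of some $L_{K(n)}F(n)$ together with the smashing property of the localizations and dualizability of $F(n)$:
\[
[\Sigma^*L_{n-1}S^0,M_nF(n)]\cong[\Sigma^*L_{n-1}S^0\otimes F(n)^{\vee},L_nS^0]\cong[\Sigma^*L_{n-1}(F(n)^{\vee}),L_nS^0]=0,
\]
since $F(n)^{\vee}$ again has type $n$, forcing $L_{n-1}(F(n)^{\vee})=0$.

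Granting this comparison for $Z=Y$ and $Z=TY$, naturality of $\epsilon^*$ in the target identifies $\pi_*^{E_n}(M_n\delta)$ with $[\Sigma^*M_nS^0,M_n\delta]$, which corresponds to $\pi_*^{K(n)}\delta=0$; as $M_n\delta\neq0$, it violates the $E_n$-local generating hypothesis. I expect the main obstacle to be precisely this homotopy comparison: a priori $\pi_*^{E_n}$ of a monochromatic spectrum can carry lower-chromatic contributions---reflecting chromatic splitting---that are invisible to $\pi_*^{K(n)}$, and it is the smashing-plus-duality vanishing above that eliminates them for compact objects. The remaining ingredients (compact generation of $\Sp_{E_n}$, the monochromatic equivalence, and the Auslander--Reiten factorization property) may be quoted from the recollections together with \Cref{thm:localart} and \Cref{cor:localgenhyp}.
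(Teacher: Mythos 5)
Your proposal is correct, and its overall strategy is the same as the paper's: quote \Cref{cor:localgenhyp} and show that the $K(n)$-local counterexample $\delta\colon Y \to TY$ also falsifies faithfulness of $\pi_*$ on $\Sp_{E_n}^{\omega}$. Where you differ is in how the two supporting facts are verified, and your route is considerably heavier than the paper's. First, no transport along the monochromatic equivalence is needed: for $X \in \Sp_{K(n)}^{\omega}$ one has $L_{n-1}X \simeq 0$ and $L_nX \simeq X$ (as $X$ is $K(n)$-local, hence $E_n$-local), so $M_nX \simeq X$ --- this is even recorded in the proof of \Cref{thm:localart} --- and your $M_n\delta$ is just $\delta$ itself; the inclusion $\Sp_{K(n)}^{\omega} \subseteq \Sp_{E_n}^{\omega}$ then follows, as you argue, from compactness of $L_nF(n) \simeq L_{K(n)}F(n)$ in $\Sp_{E_n}$. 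Second, the homotopy comparison is immediate from the universal property of Bousfield localization: since $X$ is local with respect to both $E_n$ and $K(n)$, precomposition with the localization maps $S^0 \to L_nS^0 \to L_{K(n)}S^0$ identifies both $\Hom_{\Sp_{E_n}}^*(L_nS^0,X)$ and $\Hom_{\Sp_{K(n)}}^*(L_{K(n)}S^0,X)$ with the ordinary homotopy groups $\pi_*X$, naturally in $X$; this is exactly the isomorphism the paper invokes. Your cofiber-sequence argument --- reducing to $[\Sigma^*L_{n-1}S^0,M_nZ]=0$ and proving that vanishing via the smashing property and dualizability of $F(n)$ --- is a correct proof of the same isomorphism, and it has the merit of making explicit why no lower-chromatic (chromatic-splitting-type) contributions can appear for compact objects; but it re-derives by hand what locality gives for free. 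One small wording caution: ``$Z$ is a retract of some $L_{K(n)}F(n)$'' should be read as ``a retract of $L_{K(n)}F$ for a finite type-$n$ spectrum $F$ depending on $Z$'' (Hovey--Strickland's characterization of compact $K(n)$-local spectra, used the same way in the proof of \Cref{thm:localart}); alternatively, the thick-subcategory reduction you implicitly perform suffices, since the class of $Z$ with $[\Sigma^*L_{n-1}S^0,M_nZ]=0$ is thick.
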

\begin{proof}
Since $\Sp_{K(n)}^{\omega} \subseteq \Sp_{E_n}^{\omega}$ and 
\[
\Hom_{\Sp_{E_n}}^*(L_nS^0,X) \cong \Hom_{\Sp_{K(n)}}^*(L_{K(n)}S^0,X)\] 
for $X \in \Sp_{K(n)}^{\omega}$, the result follows immediately from \Cref{cor:localgenhyp}.
\end{proof}

\begin{rem}
In \cite[Sec.~4]{devinatzgh2}, Devinatz explicitly constructs non-trivial homotopy classes $h_I \in \pi_*L_{K(n)}M_I$ for any $n \ge 2$ and sufficiently large primes $p$, where $M_I$ denotes a generalized Moore spectrum of type $n$. He then shows in his Appendix 2 that the Spanier--Whitehead dual of $h_I$,
\[
\xymatrix{h_I^{\vee}\colon \Sigma^mL_nDM_I \ar[r] & L_nS^0,}
\]
satisfies $\pi_*h_I^{\vee}=0$, thereby also providing a counterexample to the $E_n$-local generating hypothesis. The advantage of his construction is that the target of $h_I^{\vee}$ is $L_nS^0$, but these classes do not immediately give counterexamples to the $K(n)$-local generating hypothesis. 
\end{rem}

One feature of the $K(n)$-local stable homotopy category is that it has an interesting Picard group $\Pic_n$ of invertible objects with respect to the localized smash product. Arguably, it is thus more natural to grade homotopy groups over $\Pic_n$, i.e., to consider homotopy groups 
\[
\pi^P(X) = \Hom_{\Sp_{K(n)}}(P,X)
\]
for any $P \in \Pic_n$, see \cite[Sec.~14]{hovey_morava_1999}. Consequently, we can formulate the $\Pic$-graded $K(n)$-local generating hypothesis.

\begin{pickngh}
If $f\colon X \to Y$ is a map in $\Sp_{K(n)}^{\omega}$ such that $\pi^P(f) = 0$ for all $P \in \Pic_n$, then $f=0$. 
\end{pickngh}

Although this statement is a priori weaker than the $K(n)$-local generating hypothesis, the result remains the same. 

\begin{cor}\label{cor:piclocalgenhyp}
The $\Pic$-graded $K(n)$-local generating hypothesis does not hold for any $n>0$ and any prime $p$.
\end{cor}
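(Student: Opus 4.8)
The plan is to reduce the $\Pic$-graded statement to the $\Z$-graded one already established in \Cref{cor:localgenhyp}. The key observation is that the counterexample produced in the proof of \Cref{cor:localgenhyp} is the connecting map $\delta\colon Y \to TY$ in an Auslander--Reiten sequence, and that the argument falsifying the $\Z$-graded generating hypothesis used only the defining factorization property of such a sequence against maps out of shifts of the unit $S = L_{K(n)}S^0$. To upgrade this, I would reexamine \Cref{prop:argenhyp} with the $\Z$-graded sphere replaced by an arbitrary invertible object.

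First I would note that every $P \in \Pic_n$ is dualizable, hence compact is not guaranteed, but $P$ is invertible and in particular indecomposable, with $K(n)_*P$ a free $K(n)_*$-module of rank one. This is the correct analogue of the fact used for $L_{K(n)}S^0$: if $g\colon P \to Y$ were a split epimorphism for $Y$ a nontrivial indecomposable compact object, then $Y$ would be a retract of $P$, forcing $Y \simeq P$ by Krull--Schmidt (\Cref{prop:knkrullschmidt}); but $P$ is invertible and hence not compact for $n>0$ (its $K(n)$-homology has rank one but $P$ is not a finite complex), a contradiction. Thus no map from an invertible object $P$ to a nontrivial indecomposable compact $Y$ can be a split epimorphism.

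The core step is then formally identical to \Cref{prop:argenhyp}: given an indecomposable $Y \in \Sp_{K(n)}^{\omega}$ with Auslander--Reiten sequence \eqref{eq:knarseq}, take any $P \in \Pic_n$ and any map $f\colon P \to Y$. By the preceding paragraph $f$ is not a split epimorphism, so condition (2) of \Cref{defn:arsequence} supplies a lift of $f$ through $X \to Y$, which forces $\delta \circ f = 0$. Since $P$ was arbitrary, $\pi^P(\delta) = 0$ for all $P \in \Pic_n$, while $\delta \ne 0$ because the Auslander--Reiten sequence does not split. Hence $\delta$ is a counterexample to the $\Pic$-graded $K(n)$-local generating hypothesis for every $n>0$ and every prime $p$.

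The only point requiring genuine care—and the step I expect to be the main obstacle—is verifying that the factorization property of the Auslander--Reiten sequence applies against maps out of invertible objects rather than only shifts of $S$. The definition of an Auslander--Reiten sequence demands factorization of \emph{every} map $g'\colon Y' \to Y$ that is not a split epimorphism, with no restriction on the source $Y'$; so once I confirm that $f\colon P \to Y$ is not a split epimorphism, the factorization is automatic. The substantive content therefore lies entirely in the Krull--Schmidt argument showing that an invertible $P$ cannot be a retract of a nontrivial compact indecomposable for $n>0$, which rests on the non-compactness of invertible objects in $\Sp_{K(n)}$.
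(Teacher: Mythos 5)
Your proposal is correct and takes essentially the same route as the paper: show that every $P \in \Pic_n$ is indecomposable and non-compact for $n>0$, and then rerun the Auslander--Reiten factorization argument of \Cref{cor:localgenhyp} with $P$ in place of shifts of the unit. The one soft spot is your parenthetical justification of non-compactness (``$P$ is not a finite complex'' is not the relevant criterion); the paper instead deduces it from $E_n^*P \cong \Sigma^m E_n^*$ via \cite[Prop.~14.2]{hovey_morava_1999}, since compact $K(n)$-local spectra have degreewise finite $E_n$-cohomology.
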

\begin{proof}
An object $P \in \Sp_{K(n)}$ is invertible if and only if $E_n^*P \cong \Sigma^mE_n^*$ for some $m \in \Z$, by \cite[Prop.~14.2]{hovey_morava_1999}, so $P$ cannot be compact. This also implies that $K(n)_*P \cong \Sigma^mK(n)_*$ for $P\in \Pic_n$, whence $P$ is indecomposable. Therefore, the same argument as in \Cref{cor:localgenhyp} yields the result.
\end{proof}

\begin{rem}
A similar proof shows that the $\Pic$-graded version of the $E_n$-local generating hypothesis holds if and only if $n=0$. 
\end{rem}

\begin{rem}
After discovering the above proof, we noticed that, at height $n=1$ and $p>2$, a different counterexample to the ($\Pic$-graded) $K(n)$-local generating hypothesis is given in \cite[Sec.~15.1]{hovey_morava_1999}, where it is attributed to Mike Hopkins. Presumably, the example given there could be generalized to higher heights using the Adams operations on Morava $E$-theory constructed by Ando in \cite[Prop.~3.6.1]{andoisog}, but we have not checked the details.
\end{rem}

\bibliography{bibliography}
\bibliographystyle{alpha}
\end{document}